\def\b{\overline}
\def\vv{{\underline{v}}}
\def\nunu{\underline{\nu}}
\def\tt{{\underline{t}}}
\def\1{\underline{1}}
\def\0{\underline{0}}
\def\R{\mathbb R}
\def\P{\mathbb P}
\def\LL{{\cal L}}
\def\Z{\mathbb Z}
\def\C{\mathbb C}
\def\K{\mathbb K}
\def\OO{{\mathcal O}}
\def\EE{{\mathcal E}}
\newtheorem{theorem}{Theorem}
\newtheorem{lemma}{Lemma}
\theoremstyle{definition}
\newtheorem{remark}{Remark}
\title{Real Poincar\'e series of a plane divisorial valuation
\footnote{Math. Subject Class. 2020:
16W60, 14B05.
Keywords: Poincar\'e series, germs of real functions, plane divisorial valuation.
}
}
\author{
A.~Campillo,
\and F.~Delgado \thanks{The
first two authors partially supported by
grant PID2022-138906NB-C21 funded by
MICIU/AEI/ 10.13039/501100011033 and by ERDF/EU.}\; ,
\and S.M.~Gusein-Zade\thanks{
The work of the third author (Sections~\ref{sec:Introduction}
and~\ref{sec:classical})
was supported by the grant 24-11-00124 of the Russian Science Foundation.
} }
\date{}
\begin{document}

\def\eps{\varepsilon}

\maketitle

\begin{abstract}
Earlier, there was computed the Poincar\'e series of a valuation or of a collection of valuations on
the ring of germs of holomorphic functions in two variables.
For a collection of several plane curve valuations it appeared to coincide
with the Alexander polynomial of the corresponding algebraic link.
Recently, the authors defined Poincar\'e series of a valuation or of a collection of valuations in the real setting. (Actually, there were defined three versions of them, however, one of them was found to be ``not computable''.) These two Poincar'e series were computed for one plane curve valuation. Here we compute them for a plane divisorial valuation.
\end{abstract}

\section{Introduction}\label{sec:Introduction}
Let $\K$ be either the field of complex numbers $\C$ or the field of real numbers $\R$,
let $(V,0)$ be a germ of an analytic variety over $\K$, and let $\EE^{\K}_{V,0}$ be the
ring of germs of functions on $(V,0)$.
(For $\K=\C$ (the ring of complex numbers) $\EE^{\K}_{V,0}$
is usually denoted by $\OO_{V,0}$.)
A function
$\nu:\EE^{\K}_{V,0}\to\Z_{\ge 0}\cup{\{+\infty\}}$
is a (discrete, rank one) {\em valuation} on $\EE^{\K}_{V,0}$ if
\begin{enumerate}
\item[1)] $\nu(0)=+\infty$;
\item[2)] $\nu(\lambda f)=\nu(f)$ for $f\in \EE^{\K}_{V,0}$,
$\lambda\in\K^*:=\K\setminus\{0\}$;
\item[3)] $\nu(f_1+f_2)\ge\min\{\nu(f_1),\nu(f_2)\}$ for $f_1,f_2\in \EE^{\K}_{V,0}$;
\item[4)] $\nu(f_1f_2)=\nu(f_1)+\nu(f_2)$ for $f_1,f_2\in \EE^{\K}_{V,0}$\,.
\end{enumerate}
(Sometimes, for a valuation, one demands that $\nu(f)\ne+\infty$ for $f\ne 0$. In this case a function described above is usually called a pre-valuation.)

For a valuation $\nu$ on $\EE^{\K}_{V,0}$ and $v\in\Z$, let
$J(v)=J_{\K}(v):=\{f\in \EE^{\K}_{V,0}:\nu(f)\ge v\}$.
The {\em Poincar\'e series} of the valuation $\nu$ is
\begin{equation}\label{eqn:Poincare_series}
 P_{\nu}(t)=\sum_{v=0}^{\infty}\dim_{\K} \left(J(v)/J(v+1)\right)\cdot t^v\,.
\end{equation}
For a (finite) collection of valuations the notion of
the Poincar\'e series was introduced in~\cite{CDK}
(see also below).

Let $(C,0)$ be a germ of an irreducible curve on
$(\C^2,0)$ defined by a parametrization (an uniformization)
$\varphi:(\C,0)\to(\C^2,0)$. For $f\in\OO_{\C^2,0}$, let
$\nu(f)$ be the degree of the leading term in the power
series decomposition of the function
$f\circ\varphi:(\C,0)\to(\C,0)$:
$$
f(\varphi(\tau))=a\cdot \tau^{\nu(f)} + \text{terms of higher degree}\,,
$$
where $a\ne 0$. (If $f(\varphi(\tau))\equiv 0$,
one assumes $\nu(f)$ to be equal to $+\infty$.)
The function $\nu$ is a valuation on $\OO_{\C^2,0}$
called {\em a (plane) curve valuation}.
Let $\pi:(X,D)\to(\C^2,0)$ be a modification of $(\C^2,0)$ (by a finite number of blow-ups), $D=\pi^{-1}(0)$ is
the exceptional divisor (consisting of several components
$E_{\sigma}$,
each of them being isomorphic to the complex projective line
$\C\P^1$). Let $E_{\delta}$ be  one of these components.
For $f\in\OO_{\C^2,0}$, let $\nu(f)$ be the
multiplicity of the lifting $f\circ\pi$ of the germ $f$ to the surface $X$ of the modification along the component
$E_{\delta}$. The function $\nu$ is a valuation on $\OO_{\C^2,0}$ (even not a pre-valuation in the sense described above)
called {\em a divisorial valuation}.

The Poincar\'e series of a plane curve valuation
was know to specialists in plane curve singularities
long ago. (One can see the corresponding equation, e.\,g., in~\cite{FAA-1999}.)
The Poincar\'e series of a plane divisorial valuation
was computed in~\cite{Galindo-1995}.
The Poincar\'e series of a collection of curve valuations on the ring
$\OO_{\C^2,0}$ was
defined in~\cite{CDK} and
computed in~\cite{Duke}. For a collection of divisorial valuations on this ring
it was computed in~\cite{DG}, see also~\cite{DGN-2008}.
The main (and the most effective) way to compute the
Poincar\'e series of a collection of valuations on
$\OO_{\C^2,0}$ (sometimes with additional structures, say,
with a representation of a finite group) was based on the
technique of integration with respect to the Euler characteristic over the projectivization of $\OO_{\C^2,0}$ (see, e.\,g.,~\cite{CDG-IJM-2003}). However, this method does not work (at least up to now)
in the real setting. The main problem is related with the fact that
the Euler characteristic of a complex affine space is equal to $1$,
whereas the Euler characteristic (the additive one) of a real affine space is equal
to $\pm 1$ depending on the dimension. Thus one has to use a considerably different
technique.

In \cite{BLMS-2024}, there were defined two analogues
of the Poincar\'e series in the following (real) setting.
For completeness, we shall define these Poincar\'e
series in the general case, i.\,e., for a collection
$\{\nu_i : i=1,\ldots, r\}$, of plane valuations
(that is curve or divisorial valuations on $\OO_{\C^2,0}$).
Assume that the complex plane $\C^2$ is the complexification
of a fixed real plane $\R^2\subset\C^2$. The algebra
$\EE_{\R^2,0}(=\EE^{\R}_{\R^2,0})$ is a real subalgebra
of $\OO_{\C^2,0}$.
Let us consider restrictions of the valuations $\nu_i$
to the algebra $\EE_{\R^2,0}$.
(In fact any valuation on $\EE_{\R^2,0}$ is the restriction of a valuation on
$\OO_{\C^2,0}$: see, e.\,g.,~\cite[Chapter~4, Theorem~1]{Rib}.)

For $\vv=(v_1, \ldots, v_r)\in\Z^r$, let
$$
J(\vv):=\{f\in\EE_{\R^2,0}: \nunu(f)\ge\vv \}\,.
$$
Here $\nunu(f)=(\nu_1(f),\ldots, \nu_r(f))$,
$\vv=(v_1, \ldots, v_r)\ge \vv'=(v'_1, \ldots, v'_r)$
if $v_i\ge v'_i$ for all $i$. Let
$$
\LL(\tt)=\sum_{\vv\in\Z^r}
\dim_{\R}\left(J(\vv)/J(\vv+\1)\right)\cdot\tt^{\vv},
$$
where $\tt=(t_1,\ldots, t_r)$,
$\tt^{\vv}:=t_1^{v_1}\cdots t_r^{v_r}$, $\1=(1,\ldots, 1)$.
The {\em classical Poincar\'e series} of the collection
$\{\nu_i\}$ is defined by the equation (\cite{CDK})
$$
P_{\{\nu_i\}}(\tt)=\frac{\LL(\tt)\cdot \prod_{i=1}^r (t_i-1)}{t_1\cdots t_r-1}
\in \Z[[t_1,\ldots, t_r]]\,.
$$
For $r=1$, the definition reduces to~(\ref{eqn:Poincare_series}) (with $\K=\R$).

Let
$$
F_{\vv}=\left(J(\vv)/J(\vv+\1)\right) \left\backslash
\left(\bigcup_{i=1}^{r}
J(\vv+\1_i)/J(\vv+\1)\right)\right.
\,,
$$
where
the $i$th component of
$\1_i$ is equal to $1$ and all other components
are equal to $0$.
The union
$$
\widehat{S}_{\{\nu_i\}}=\bigsqcup_{\vv\in\Z_{\ge0}^r}
F_{\vv}
$$
is called {\em the extended semigroup} of the collection $\{\nu_i\}$. (The semigroup structure
on $\widehat{S}_{\{\nu_i\}}$ is defined by the multiplication of functions;
$F_{\vv}\cdot F_{\vv'}\subset F_{\vv+\vv'}$.)
Let $\P F_{\vv}$ be the (real) projectivization of $F_{\vv}$,
i.\,e., the quotient $F_{\vv}/\R^*$.
The {\em real Poincar\'e series} of the collection
$\{\nu_i\}$ is defined by
\begin{equation}\label{eqn:real_Poincare}
P_{\{\nu_i\}}^{\chi}(\tt)=
 \sum_{\vv\in\Z_{\ge0}^r}
 \chi(\P F_{\vv})\cdot \tt^{\vv}
 \in \Z[[t_1,\ldots, t_r]]\,,
\end{equation}
where one uses {\em the additive Euler characteristic}
$\chi(\cdot)$, i.\,e., the alternating sum of the ranks of the cohomology groups with compact support. In the case of one valuation Equation~(\ref{eqn:real_Poincare}) reduces to
$$
P_{\nu}^{\chi}(t)=
 \sum_{v=0}^{\infty}
 \frac{1}{2}\cdot
 \left(1+(-1)^{\dim_{\R}\left(J(v)/J(v+1)\right)-1}\right)
 \cdot t^v.
$$
In this case, each coefficient of the power series
$P_{\nu}^{\chi}(t)$ is equal to $1$ or $0$.

\section{The minimal real resolution
of a divisorial valuation}\label{sec:resolution}

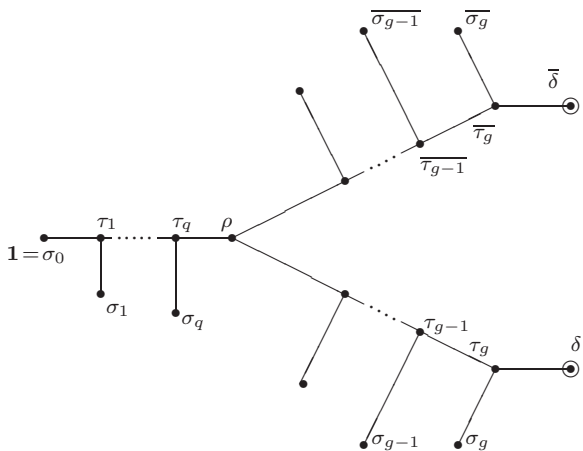
\begin{figure}[h]
$$
\unitlength=0.50mm
\begin{picture}(160.00,110.00)(0,-20)
\thinlines
\put(-10,30){\line(1,0){18}} 
\put(10,30){\circle*{0.5}}
\put(12,30){\circle*{0.5}}
\put(14,30){\circle*{0.5}}
\put(16,30){\circle*{0.5}}
\put(18,30){\circle*{0.5}}
\put(20,30){\line(1,0){20}} 
\put(40,30){\circle*{2}} 
\put(37,33){{\scriptsize$\rho$}} 

\put(40,30){\line(2,1){35}} 
\put(85,52.5){\line(2,1){25}}
\put(77,48.5){\circle*{0.5}}
\put(79,49.5){\circle*{0.5}}
\put(81,50.5){\circle*{0.5}}
\put(83,51.5){\circle*{0.5}}
\put(110,65){\circle*{2}} 
\put(70,45){\circle*{2}} 
\put(90,55){\circle*{2}} 
\put(110,65){\line(-1,2){10}} 
\put(100,85){\circle*{2}} 
\put(102,87){{\scriptsize$\overline{\sigma_{g}}$}} 
\put(70,45){\line(-1,2){12}} 
\put(58,69){\circle*{2}} 
\put(90,55){\line(-1,2){15}} 
\put(75,85){\circle*{2}} 
\put(77,87){{\scriptsize$\overline{\sigma_{g-1}}$}} 
\put(110,65){\line(1,0){20}} 
\put(130,65){\circle{4}} 
\put(130,65){\circle*{2}} 
\put(124,70){{\scriptsize$\overline{\delta}$}} 
\put(90,48){{\scriptsize$\overline{\tau_{g-1}}$}} 
\put(104,56.5){{\scriptsize$\overline{\tau_g}$}} 

\put(40,30){\line(2,-1){35}} 
\put(85,7.5){\line(2,-1){25}}
\put(77,11.5){\circle*{0.5}}
\put(79,10.5){\circle*{0.5}}
\put(81,9.5){\circle*{0.5}}
\put(83,8.5){\circle*{0.5}}
\put(110,-5){\circle*{2}} 
\put(70,15){\circle*{2}} 
\put(90,5){\circle*{2}} 
\put(110,-5){\line(-1,-2){10}} 
\put(100,-25){\circle*{2}} 
\put(103,0){{\scriptsize${{\tau_g}}$}} 
\put(91,6){{\scriptsize${\tau_{g-1}}$}} 
\put(70,15){\line(-1,-2){12}} 
\put(59,-9){\circle*{2}} 
\put(90,5){\line(-1,-2){15}} 
\put(75,-25){\circle*{2}} 
\put(110,-5){\line(1,0){20}} 
\put(130,-5){\circle{4}} 
\put(130,-5){\circle*{2}} 
\put(130,0){{\scriptsize${\delta}$}} 
\put(77,-25){{\scriptsize${\sigma_{g-1}}$}} 
\put(102,-25){{\scriptsize${\sigma_{g}}$}} 


\put(-10,30){\circle*{2}}
\put(5,30){\line(0,-1){15}}
\put(5,30){\circle*{2}}
\put(5,15){\circle*{2}}
\put(25,30){\line(0,-1){20}}
\put(25,30){\circle*{2}}
\put(25,10){\circle*{2}}

\put(-20,24){{\scriptsize ${\bf 1}\!=\!\sigma_0$}}
\put(6.5,10){{\scriptsize$\sigma_1$}}
\put(26.5,7){{\scriptsize$\sigma_q$}}
\put(4,33){{\scriptsize$\tau_1$}}
\put(24,33){{\scriptsize$\tau_q$}}
\end{picture}
$$
\caption{The minimal real resolution graph $\Gamma$ of the valuation $\nu$.}
\label{fig1}
\end{figure}

Let $\nu$ be the divisorial valuation defined by a component $E_{\delta}$ of a modification of $(\C^2,0)$.
Let $\pi:(X,D)\to(\C^2,0)$ be the minimal {\bf real} resolution of the
(divisorial) valuation $\nu$.
The dual graph $\Gamma$ of the modification $\pi$
looks like in Figure~\ref{fig1}.
(The fact that the
modification $\pi$ is real means that it is obtained by
a secuence of blow-ups such that at each step they are
made either at real points (points invariant with respect
to the complex conjugation) or at pairs of complex conjugate points simultaneously.) The vertices $\sigma$ of $\Gamma$
correspond to the components $E_{\sigma}$ of the exceptional divisor $D$ (each component is isomorphic to the complex projective line $\C\P^1$); two vertices are connected by an edge if (and only if)
the corresponding components intersect.
For $\sigma\in \Gamma$, a {\it curvette\/} at the component $E_\sigma$
is the image under the modification $\pi$
of a germ of a curve transversal to the component
$E_{\sigma}$ at a smooth point of the exceptional
divisor $D$, i.\,e., not at an intersection point with another component of $D$.

The complex conjugation acts on the surface $(X,D)$ of the resolution and also on the
resolution graph $\Gamma$.
The vertices $\sigma_i$ and $\overline{\sigma_i}$,
$i=0,1,\ldots, g$, are the {\em dead ends} of the graph $\Gamma$
($g$ is the number of Puiseux pairs of a curvette at the
component $E_{\delta}$; one may have
$\sigma_i=\overline{\sigma_i}$ for some $i$);
the vertices $\tau_i$ and $\overline{\tau_i}$,
$i=1,\ldots, g$, are the {\em rupture points} of $\Gamma$;
the vertex $\rho$ is the splitting point between the usual resolutions of the valuations $\nu$ and $\overline{\nu}$
(the one defined by the component $E_{\overline{\delta}}$).
(The vertex $\delta$ may coincide with $\tau_g$.)

Let $\widehat{X}=X/{\overline{\cdot}}$,
$\widehat{D}=D/{\overline{\cdot}}$, and
$\widehat{\Gamma}=\Gamma/{\overline{\cdot}}$ be the quotients
of $X$, $D$, and $\Gamma$ by the complex conjugation.
The graph $\widehat{\Gamma}$ looks like on
Figure~\ref{fig2} and is the minimal resolution graph
of the valuation $\nu$ itself.
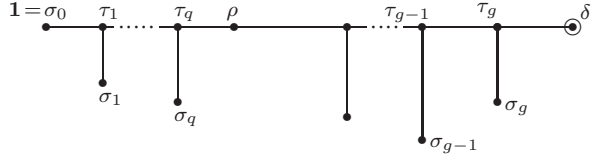
\begin{figure}[h]
$$
\unitlength=0.50mm
\begin{picture}(160.00,40.00)(0,5)
\thinlines


\put(-10,30){\line(1,0){18}}
\put(10,30){\circle*{0.5}}
\put(12,30){\circle*{0.5}}
\put(14,30){\circle*{0.5}}
\put(16,30){\circle*{0.5}}
\put(18,30){\circle*{0.5}}
\put(20,30){\line(1,0){55}}

\put(110,30){\circle*{2}}
\put(-10,30){\circle*{2}}
\put(5,30){\line(0,-1){15}}
\put(5,30){\circle*{2}}
\put(5,15){\circle*{2}}
\put(25,30){\line(0,-1){20}}
\put(25,30){\circle*{2}}
\put(25,10){\circle*{2}}

\put(40,30){\circle*{2}} 
\put(38,33){{\scriptsize$\rho$}} 
\put(70,30){\circle*{2}} 
\put(90,30){\circle*{2}} 
\put(110,30){\circle*{2}} 

\put(70,30){\line(0,-1){24}}
\put(90,30){\line(0,-1){30}}
\put(110,30){\line(0,-1){20}}
\put(70,6){\circle*{2}} 
\put(90,0){\circle*{2}} 
\put(110,10){\circle*{2}} 

\put(85,30){\line(1,0){45}} 

\put(77,30){\circle*{0.5}}
\put(79,30){\circle*{0.5}}
\put(81,30){\circle*{0.5}}
\put(83,30){\circle*{0.5}}

\put(-20,33){{\scriptsize ${\bf 1}\!=\!\sigma_0$}}
\put(4,33){{\scriptsize $\tau_1$}}
\put(24,33){{\scriptsize $\tau_q$}}
\put(80,33){{\scriptsize $\tau_{g-1}$}}
\put(105.5,34){{\scriptsize $\!\tau_g$}}
\put(132,32){{\scriptsize $\delta$}}

\put(4,10){{\scriptsize $\sigma_1$}}
\put(24,5){{\scriptsize $\sigma_q$}}
\put(92,-2){{\scriptsize $\sigma_{g-1}$}}
\put(112,8){{\scriptsize $\sigma_g$}}
\put(130,30){\circle*{2}}
\put(130,30){\circle{4}}
\end{picture}
$$
\caption{The quotient $\widehat{\Gamma}$ of the resolution graph $\Gamma$ by
the complex conjugation.}
\label{fig2}
\end{figure}
One has $\widehat{D}=
\bigcup_{\sigma\in\widehat{\Gamma}}
\widehat{E}_{\sigma}$, where $\widehat{E}_{\sigma}$ is the image of $E_{\sigma}$ in $\widehat{D}$.
($\widehat{E}_{\sigma}$ is either the disc (if
$\sigma=\overline{\sigma}$) or the complex projective line
(otherwise).)

Let the exceptional divisor $D$ be the union of its irreducible componets $E_\sigma$, $\sigma\in\Gamma$.
One has the natural involution of complex conjugation
on $\Gamma$. Let $(E_\sigma\circ E_{\sigma'})$ be the
intersection matrix of the components of the exceptional
divisor. (The self-intersection number $E_\sigma\circ E_\sigma$ is a negative integer and, for $\sigma'\ne\sigma$,
$E_\sigma\circ E_{\sigma'}$ is either $1$ (if $E_\sigma$ and  $E_{\sigma'}$ intersect) or $0$ otherwise.) Let
$(m_{\sigma\sigma'}):=-(E_\sigma\circ E_{\sigma'})^{-1}$.
(Each entry $m_{\sigma\sigma'}$ is a positive integer. It is equal to the intersection number $(C_{\sigma}\circ C_{\sigma'})$ between the curvettes $C_{\sigma}$ and $C_{\sigma'}$
at the components $E_{\sigma}$ and $E_{\sigma'}$ respectively.)
Let $m_{\sigma}:=m_{\sigma\delta}$.

Let $M_\sigma\in\Z_{>0}$ be defined in the following way.
If $E_{\overline{\sigma}}=E_{\sigma}$, one puts $M_\sigma=m_\sigma$; if
$E_{\overline{\sigma}}\ne E_\sigma$,
one puts $M_\sigma:=m_\sigma+m_{\overline\sigma}$.
In particular, $M_{\rho}=m_{\rho}$.

For $\sigma\in\Gamma$, let a curvette 
$(C_{\sigma},0)\subset(\C^2,0)$
at the component $E_{\sigma}$
be given by an equation $g_{\sigma}=0$,
$g_{\sigma}\in\OO_{\C^2,0}$.
(If $\sigma=\overline{\sigma}$, one may assume that the germs
$(C_{\sigma},0)$ and $g_{\sigma}$ are real ones.)
One can show that
$$
M_{\sigma}=\begin{cases}
            \nu(g_{\sigma}), & \text{if
            $\sigma=\overline{\sigma}$};\\
            \nu(g_{\sigma}\cdot\overline{g_{\sigma}}), & \text{if
            $\sigma\neq\overline{\sigma}$}.\\
           \end{cases}
$$

\section{The classical Poincar\'e series
of a divisorial valuation}\label{sec:classical}
Let $S=\{\nu(f): f\in\EE_{\R^2,0}\}$
be the semigroup of values of the valuation
$\nu$ on the algebra $\EE_{\R^2,0}$ and
let $P^S(t)=\sum\limits_{v\in S}t^v$.
Let $C$ be a curvette at the component
 $E_{\delta}$ (i.\,e., the image under the projection $\pi$ of a germ $\gamma$ of a
curve on $X$ transversal to the
 component $E_{\delta}$ at a smooth point
 of the exceptional divisor $D$).
It is known that the semigroup $S$ is equal to the semigroup
$S_C$ of the curve valuation $\nu_C$.
Then, one has
$$
P^S(t)=\frac{\prod_{i=1}^g(1-t^{M_{\tau_i}})}
 {\prod_{i=0}^g(1-t^{M_{\sigma_i}})}
$$
(see~\cite[Theorem 4]{BLMS-2024}).

One can see that,
 if the component $\delta$ is real, that is
 $\delta=\overline{\delta}$, the Poincar\'e
 series $P_{\nu}(t)$ coincides with the usual
 Poincar\'e series of the (divisorial) valuation
 $\nu$ (that is defined in the complex setting and computed in~\cite{Galindo-1995}):
 $$
 P_{\nu}(t)=\frac{1}{1-t^{m_{\delta}}}\cdot
 \frac{\prod_{i=1}^g(1-t^{m_{\tau_i}})}
 {\prod_{i=0}^g(1-t^{m_{\sigma_i}})}.
 $$
 (In this case $M_{\sigma}=m_{\sigma}$ for all $\sigma$.)
 Therefore from now on we assume that $\delta\ne\overline{\delta}$.
 In particular, $\delta$ is greater than $\rho$, i.\,e.,
 $\rho$ lies on the geodesic from $1$ to $\delta$.

Let $P_{\nu_C}(t)$ be the classical Poincar\'e
 series of the curve valuation $\nu_C$ defined by $C$.
  According to~\cite[Theorem 8]{BLMS-2024},
 one has
 $$
 P_{\nu_C}(t) = (1+t^{M_\rho})\cdot \frac{\prod_{i=1}^g(1-t^{M_{\tau_i}})}
 {\prod_{i=0}^g(1-t^{M_{\sigma_i}})}
{\color{red}=
(1+t^{M_\rho})\cdot P^S(t) \,.}
 $$
 Each coefficient $a_v$ of the series $P_{\nu_C}(t)=
 \sum_{v=0}^{\infty}a_vt^v$
 is equal to $0$, $1$ or $2$ (depending on whether
 $J(v)/J(v+1)$ is trivial, is the real line or is the complex line respectively).
The condition that $\delta>\rho$
 implies that $a_{M_\delta}=2$: see~\cite[Proposition~5, Lemmas~6, 7]{BLMS-2024}.

\begin{theorem}\label{theo:classic_divis}
 The classical Poincar\'e series of the divisorial
 valuation $\nu$ is equal to
 \begin{equation}\label{eqn:classic_divis}
  P_{\nu}(t)=\frac{1}{1-t^{M_{\delta}}}\cdot P_{\nu_C}(t)=
  \frac{1+t^{M_{\rho}}}{1-t^{M_{\delta}}}
  \cdot \frac{\prod_{i=1}^g(1-t^{M_{\tau_i}})}
 {\prod_{i=0}^g(1-t^{M_{\sigma_i}})}\,.
 \end{equation}
\end{theorem}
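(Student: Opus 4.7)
The plan is to prove the identity $(1 - t^{M_\delta})\,P_\nu(t) = P_{\nu_C}(t)$; the full formula~(\ref{eqn:classic_divis}) then follows upon substituting the expression $P_{\nu_C}(t) = (1+t^{M_\rho})\,P^S(t)$ already recorded.

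Set $h := g_\delta \cdot \overline{g_\delta} \in \EE_{\R^2,0}$; since $g_\delta$ and $\overline{g_\delta}$ are complex conjugates, $h$ is real and $\nu(h) = m_\delta + m_{\overline\delta} = M_\delta$. Multiplication by $h$ on $\EE_{\R^2,0}$ is injective and shifts the $\nu$-filtration by exactly $M_\delta$, so for every $v \ge 0$ one obtains a short exact sequence of finite-dimensional real vector spaces
\[
 0 \longrightarrow J(v - M_\delta)/J(v - M_\delta + 1) \xrightarrow{\;\cdot h\;} J(v)/J(v+1) \longrightarrow \overline{J}(v)/\overline{J}(v+1) \longrightarrow 0,
\]
where $\overline{J}(v) := (J(v) + h\,\EE_{\R^2,0})/h\,\EE_{\R^2,0}$ is the image of $J(v)$ in $\EE_{\R^2,0}/h\,\EE_{\R^2,0}$ (with the convention $J(w) = \EE_{\R^2,0}$ for $w \le 0$). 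Summing $\R$-dimensions gives $(1 - t^{M_\delta})\,P_\nu(t) = Q(t)$, where $Q(t) := \sum_{v\ge 0}\dim_\R\bigl(\overline{J}(v)/\overline{J}(v+1)\bigr)\, t^v$.

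To identify $Q(t)$ with $P_{\nu_C}(t)$, I use restriction along the parametrization $\varphi$ of $C$: the map $\phi_C: \EE_{\R^2,0} \to \OO_{C,0} \cong \C\{\tau\}$, $f\mapsto f\circ\varphi$, pulls back the $\tau$-adic order to $\nu_C$. Its kernel consists of real germs divisible by $g_\delta$ in $\OO_{\C^2,0}$; reality then forces divisibility also by $\overline{g_\delta}$, and since $g_\delta,\overline{g_\delta}$ are coprime irreducibles this kernel equals $h\,\EE_{\R^2,0}$. Hence $\phi_C$ induces an injection $\EE_{\R^2,0}/h\EE_{\R^2,0} \hookrightarrow \C\{\tau\}$; because $h\,\EE_{\R^2,0}\subseteq J_C(v)$ for all $v$ (as $\nu_C(h)=+\infty$), the graded quotients $\phi_C(J_C(v))/\phi_C(J_C(v+1))$ are canonically isomorphic to $J_C(v)/J_C(v+1)$, so $\sum_v \dim_\R\phi_C(J_C(v))/\phi_C(J_C(v+1))\cdot t^v = P_{\nu_C}(t)$. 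Because $\nu \le \nu_C$, one already has $\overline{J}(v) = \phi_C(J(v)) \subseteq \phi_C(J_C(v))$.

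The crux is then the key lemma: for every $f \in \EE_{\R^2,0}$ with $\nu_C(f) \ge v$ there exists $u \in \EE_{\R^2,0}$ with $\nu(f - hu) \ge v$; granted this, $\overline{J}(v) = \phi_C(J_C(v))$ for all $v$, so $Q(t) = P_{\nu_C}(t)$ and the theorem follows. I would prove the lemma by descending induction on $v - \nu(f)$; the base case $\nu(f) \ge v$ is trivial. In the inductive step, set $n := \nu(f)$ and use the formula $\nu_C(f) = \nu(f) + (\widetilde f \cdot \gamma)$ (valid when $g_\delta \nmid f$; $\widetilde f$ the strict transform, $\gamma$ the lift of $C$ to $X$): $\nu_C(f) > n$ means $\widetilde f$ passes through $p := \gamma \cap E_\delta$, and reality forces $\widetilde f$ also through $\overline p := \overline\gamma \cap E_{\overline\delta}$. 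A local analysis near $p$, using that $\pi^* h$ has multiplicity $M_\delta$ along $E_\delta$ with $\gamma$ as a transversal strict-transform component, shows $n \ge M_\delta$ and exhibits a real $u$ of value $n - M_\delta$ for which $\nu(f - hu) > n$. Iterating (noting $\nu_C(f - hu) = \nu_C(f)$ since $\nu_C(hu) = +\infty$) terminates in finitely many steps.

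The main obstacle is precisely this cancellation step of the key lemma: both proving the lower bound $n \ge M_\delta$ when $\nu_C(f) > \nu(f)$ for real $f$, and producing the conjugation-symmetric correction $u$ from the initial form of $\pi^* f$ on $E_\delta \cup E_{\overline\delta}$. Once the key lemma is established, the theorem follows directly from the exact sequence displayed above.
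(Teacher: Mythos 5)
Your formal reduction is correct and genuinely different from the paper's argument: the short exact sequence induced by multiplication by $h=g_\delta\overline{g_\delta}$ is indeed exact (one checks $J(v)\cap\bigl(J(v+1)+h\,\EE_{\R^2,0}\bigr)=J(v+1)+h\,J(v-M_\delta)$), it yields $(1-t^{M_\delta})P_\nu(t)=Q(t)$, and the identification of the kernel of $\phi_C$ on real germs with $h\,\EE_{\R^2,0}$ is sound. The paper instead proves the coefficient identity $b_v=2k+a_r$ directly (Lemma~\ref{lemma:b_v}), by stratifying $J(v)/J(v+1)$ according to the intersection numbers of strict transforms with the components of $D$ and factoring $f=f'f''$; your route would, if completed, be more structural.

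However, there is a genuine gap: the entire content of the theorem has been pushed into your ``key lemma'' $J_C(v)=J(v)+h\,\EE_{\R^2,0}$, and that lemma is not proved --- you yourself call its cancellation step ``the main obstacle.'' Two things are missing. First, the bound $n\ge M_\delta$: this is provable (a real $f$ whose strict transform meets $E_\delta$ at $p$ has a branch $B$ passing through the whole cluster of $\delta$, hence $\nu(B)\ge m_\delta$ by the proximity inequalities, while the distinct conjugate branch $\overline{B}$ through $\overline{p}$ contributes at least $m_{\overline\delta}$), but no argument is given. Second, and more seriously, the existence of a real $u$ with $\nu(u)=n-M_\delta$ and $\nu(f-hu)>n$ requires that the initial form of $\pi^*f$ along $E_\delta$ --- a section on $E_\delta\cong\C\P^1$ whose degree and zero divisor are constrained by all the multiplicities $\nu_\sigma(f)$, not only by $n$ --- be divisible by the initial form of $\pi^*h$, with a quotient realizable as the initial form of an actual real germ of value $n-M_\delta$. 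This is a nontrivial surjectivity statement about initial forms (essentially the structure of the fibres $F_{\vv}$ of the extended semigroup), and the conjugation-symmetrization of $u$ needs care as well: replacing $u$ by $(u+\overline{u})/2$ can alter or cancel the initial form on $E_\delta$ when $\nu_{\overline\delta}(u)=\nu_\delta(u)$. Note that, given your exact sequence and the recorded formula for $P_{\nu_C}(t)$, the key lemma is logically equivalent to the theorem itself; until it is supplied, the proposal is not a proof.
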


\begin{proof}
 Let $b_v$ be the coefficients of the classical
 Poincar\'e series of the valuation $\nu$:
 $P_{\nu}(t)= \sum_{v=0}^{\infty}b_vt^v$.
 For $v\in S$, let $k$ be the maximal non-negative integer
 such that $v=kM_{\delta}+r$ with $r\in S$.

 \begin{lemma}\label{lemma:b_v}
  One has $b_v=2k+a_r$.
 \end{lemma}

 \begin{proof}
 Making (if necessary) additional blow-ups at intersection
 points of the components of the total transform
 $\pi^{-1}(C)(=\pi^{-1}(0)\cup\gamma)$, one may assume that, for any  $f\in
\OO_{\C^2,0}$ with
$\nu(f)\le v$,
the strict transform of the curve germ
$\{f=0\}$ intersects the exceptional divisor
 $\pi^{-1}(0)$ only at smooth points of $\pi^{-1}(C)$.
 To compute the dimension $b_v$ of the space
 $J_{\R}(v)/J_{\R}(v+1)$, let us consider the sets of functions
 $f\in\EE_{\R^2,0}$ with fixed intersection numbers of the strict transforms of the curves $\{f=0\}$
 with all the components of the exceptional divisor $D$ and
 the (topological!) dimensions of their images in $J_{\R}(v)/J_{\R}(v+1)$. (There
are finitely many non-empty sets of this sort.) The statement will follow from the fact that all
 these dimensions are $\le 2k+a_r$ and (at least) one of them is equal to $2k+a_r$.

 If $s$ is the intersection number with the component
 $E_{\delta}$, one has $s\le k$. Let us consider the case $s<k$
 first. Each function $f$ from a set of the described type can be represented
 as the product $f'\cdot f^{''}$. where the strict transform of the curve
$\{f'=0\}$ does not intersect the component
 $E_{\delta}$ and the strict transform of the curve $\{f{''}=0\}$ intersects only
the component $E_{\delta}$ (with multiplicity $s$). (This representation is unique up to
 multiplication by a real function germ with a non-zero
 value at the origin.) The dimension of the image of the
 set of functions $f'$ in
 $J_{\R}(v-sM_{\delta})/J_{\R}(v-sM_{\delta}+1)$ is $\le 2$.
(This follows from the fact that the restriction of the ratio of the liftings
of two functions from this set to the component $E_{\delta}$ of the exceptional divisor
is a (non-zero) constant: real or complex.)
 The dimension of the image of the set of functions $f{''}$ in
 $J_{\R}(sM_{\delta})/J_{\R}(sM_{\delta}+1)$ is equal
 to $2s+1$. (Up to a constant real factor the elements in the image are in
bijective correspondence with the spaces of divisors
 of the intersection $\{f{''}=0\}\cap D$ whose dimension is $2s$.) The both sets
are invariant with respect to the multiplication by $\R^*$. Thus the dimension of the image of the set of functions
 $f$ is $\le 2s+2<2k+a_r$. For $s=k$,
 the dimension of the image of the
 set of functions $f'$ in
 $J_{\R}(v-k M_{\delta})/J_{\R}(v-k M_{\delta}+1)$ is $\le a_r$
 for each of the sets and is equal to $a_r$ for (at least) one of them.
 The dimension of the image of the set of functions $f{''}$ in
 $J_{\R}(k M_{\delta})/J_{\R}(k M_{\delta}+1)$ is equal
 to $2k+1$. Thus the dimension of the image of each of the sets of functions
 $f$ is $\le 2k+a_r$ and is equal to $2k+a_r$ for (at least) one of them. This proves the statement.
 \end{proof}

 Since $\delta>\rho$,
$M_\delta = m_\delta + m_{\overline{\delta}} = m_\delta + \ell m_\rho$ for some
integer
$\ell\ge 1$ and so  one has $a_{r+s M_{\delta}}=2$ for
$s>0$ (see \cite{BLMS-2024}).
Therefore
 $$
 \sum_{v=0}^{\infty}b_vt^v=
 \frac{1}{1-t^{M_{\delta}}}\cdot
 \sum_{v=0}^{\infty}a_vt^v.
 $$
 \end{proof}

\section{The real Poincar\'e series
of a divisorial valuation}\label{sec:real}

\begin{theorem}\label{theo:real_divisorial}
One has
\begin{align*}
\text{if $\delta=\b\delta$, then \ } & \
P_\nu^{\chi}(t) = P_\nu(t)\cdot\frac{1-t^{m_\delta}}{1+t^{m_\delta}}; \\
\text{if $\delta \neq \b\delta$, then \ } &\
P_\nu^{\chi}(t) = P_\nu(t)\cdot\frac{1-t^{m_\rho}}{1+t^{m_\rho}}.
\end{align*}
\end{theorem}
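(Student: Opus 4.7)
For a single valuation the coefficient of $t^v$ in $P_\nu^{\chi}(t)$ equals $1$ or $0$ depending on whether $b_v:=\dim_{\R}(J(v)/J(v+1))$ is odd or even, so the whole proof amounts to controlling the parity of $b_v$ via Lemma~\ref{lemma:b_v} and then repackaging the resulting generating series.

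\textbf{The case $\delta\neq\overline\delta$.} Invoking Lemma~\ref{lemma:b_v}, for $v\in S$ one has $b_v=2k+a_r$, where $v=kM_\delta+r$ is the unique decomposition with $k\ge 0$ maximal and $r\in S^*:=S\setminus(M_\delta+S)$; for $v\notin S$ one has $b_v=0$. Since $a_r\in\{0,1,2\}$, $b_v$ is odd iff $a_r=1$. The auxiliary point, which I would extract from the proof of Theorem~\ref{theo:classic_divis}, is that the fact $a_{r'+sM_\delta}=2$ for $r'\in S$ and $s\ge 1$ (quoted from \cite{BLMS-2024}) forces $a_r=1\Rightarrow r\in S^*$. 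Next, the factorisation $P_{\nu_C}(t)=(1+t^{m_\rho})P^S(t)$, together with $m_\rho\in S$ (because $\rho=\overline\rho$ and so $g_\rho$ can be chosen real), reads coefficient-wise as $a_r=\mathbf{1}[r\in S]+\mathbf{1}[r-m_\rho\in S]$; hence $\{r:a_r=1\}=S\setminus(m_\rho+S)$ has generating series $(1-t^{m_\rho})P^S(t)$. Assembling these pieces,
$$
P_\nu^{\chi}(t)=\frac{1}{1-t^{M_\delta}}\sum_{r:\,a_r=1}t^r=\frac{(1-t^{m_\rho})P^S(t)}{1-t^{M_\delta}},
$$
and dividing by $P_\nu(t)=(1+t^{m_\rho})P^S(t)/(1-t^{M_\delta})$ from Theorem~\ref{theo:classic_divis} produces the stated ratio $(1-t^{m_\rho})/(1+t^{m_\rho})$.

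\textbf{The case $\delta=\overline\delta$.} Here $M_\delta=m_\delta$ and the component $E_\delta$ is real. First I would record the elementary fact that for $f=f_1+if_2\in\OO_{\C^2,0}$ with $f_j\in\EE_{\R^2,0}$, the leading terms of $f_1\circ\pi$ and $f_2\circ\pi$ along $E_\delta$ are real-valued sections of the same line bundle on $E_\delta$, hence cannot cancel in $\R+i\R$, so $\nu(f)=\min(\nu(f_1),\nu(f_2))$. Consequently $J_\C(v)=J_\R(v)\oplus iJ_\R(v)$ and $b_v=\dim_\C(J_\C(v)/J_\C(v+1))$. Galindo's formula, recalled in the excerpt, writes $P_\nu(t)=P^S(t)/(1-t^{m_\delta})=P^S(t)\sum_{k\ge 0}t^{km_\delta}$; a direct count of decompositions $v=km_\delta+s$ with $s\in S$ then yields $b_v=k+1$ for $v=km_\delta+r\in S$ with $r\in S^*$. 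Thus $b_v$ is odd iff $k$ is even; summing over even $k$,
$$
P_\nu^{\chi}(t)=\frac{\sum_{r\in S^*}t^r}{1-t^{2m_\delta}}=\frac{(1-t^{m_\delta})P^S(t)}{1-t^{2m_\delta}}=\frac{P^S(t)}{1+t^{m_\delta}},
$$
whose ratio with $P_\nu(t)$ is the claimed $(1-t^{m_\delta})/(1+t^{m_\delta})$.

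\textbf{Where the real work sits.} Lemma~\ref{lemma:b_v} and the factorisation $P_{\nu_C}(t)=(1+t^{m_\rho})P^S(t)$ are the substantial inputs, both already in hand; the two verifications specific to Theorem~\ref{theo:real_divisorial} are (i) the identification $\{r:a_r=1\}=S\setminus(m_\rho+S)$ together with the inclusion $\{r:a_r=1\}\subseteq S^*$, and (ii) the complexification identity $J_\C(v)=J_\R(v)\oplus iJ_\R(v)$ in the case $\delta=\overline\delta$. The first is a formal consequence of the factorisation and of the $M_\delta$-shift lemma used already in the proof of Theorem~\ref{theo:classic_divis}; the second, which I expect to be the main point requiring care, is a short non-cancellation argument for leading sections on the real exceptional component $E_\delta$.
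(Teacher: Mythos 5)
Your proof is correct and takes essentially the same route as the paper: both arguments reduce the theorem to the parity of $b_v$, use the decomposition $v=kM_\delta+r$ with $r\in S\setminus(M_\delta+S)$ together with the factorization $P_{\nu_C}(t)=(1+t^{m_\rho})P^S(t)$ and the quoted fact $a_{r+sM_\delta}=2$ for $s>0$, the only difference being that you compute $P_\nu^{\chi}(t)$ in closed form and then divide by $P_\nu(t)$, whereas the paper verifies coefficient-by-coefficient that the product $P_\nu(t)\cdot\frac{1-t^{m_\delta}}{1+t^{m_\delta}}$ (resp.\ $P_\nu(t)\cdot\frac{1-t^{m_\rho}}{1+t^{m_\rho}}$) has the correct coefficients. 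Your additional complexification argument $J_{\C}(v)=J_{\R}(v)\oplus iJ_{\R}(v)$ in the case $\delta=\overline{\delta}$ merely re-derives the identification $P_\nu(t)=P^S(t)/(1-t^{m_\delta})$ that the paper already records in Section~\ref{sec:classical}.
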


\begin{proof} The coefficients of the series
$P_\nu^{\chi}(t)$ are equal to the residues
of the corresponding coefficients $b_v$ of the
series $P_{\nu}(t)= \sum_{v=0}^{\infty}b_vt^v$ modulo $2$, i.\,e.,
the coefficient of $t^v$ in $P_\nu^{\chi}(t)$
is equal to $0$ if $b_v$ is even and is equal
to $1$ if $b_v$ is odd. The equations for
$P_{\nu}(t)$ are somewhat different for real
$\delta$ (i.\,e., $\delta= \b{\delta}$) and for $\delta$ not real
(i.\,e., $\delta\neq  \b{\delta}$).
Let us consider
these cases separately.

\noindent{\bf Case $\delta=\b{\delta}$.}
One can write
$$
P_\nu(t)\cdot\frac{1-t^{m_\delta}}{1+t^{m_\delta}} =
\frac{P^S(t)}{1-t^{m_\delta}}\cdot
\frac{1-t^{m_\delta}}{1+t^{m_\delta}} =
P^S(t)\cdot\frac{1-t^{m_\delta}}{1-t^{2 m_\delta}} =
Q(t)\cdot(1-t^{m_\delta})\,.
$$
The coefficients of the series $Q(t)=\sum_{v=0}^{\infty} q_v\cdot t^v$ can be
computed in the following way. If $v\notin S$, then $q_v=0$. Let $v\in S$ be such
that $v = k(2 m_\delta)+v'$, where $v'\in S$ and
$v'-2 m_\delta\notin S$. Then $q_v=k+1$.
Now,
$$
Q(t)\cdot(1-t^{m_\delta}) =
\sum_{v=0}^{\infty}(q_v-q_{v-m_\delta})t^v.
$$
Let $v'=\eps m_\delta+v''$, where $\eps$
is equal to $0$ or $1$, $v''\in S$, and
$v''-m_\delta\notin S$. Then one has the following possibilities:
\begin{itemize}
\item
If $\eps=1$, then $v-m_\delta= k(2m_\delta)+v''$ and therefore
$q_{v-m_\delta}=k+1$, $q_v-q_{v-m_{\delta}}=0$.
Moreover,
$v = k(2m_\delta)+ m_\delta + v'' = (2k+1)m_\delta+v''$ and thus
$b_v = 2k+2 \equiv 0 \mod 2$.
\item
If $\eps=0$, then $v = k(2m_\delta)+v''$,
$v-m_\delta= (k-1)(2m_\delta) + (m_\delta+\epsilon')$ and therefore
$q_{v-m_\delta}=k$ and $q_v-q_{v-m_{\delta}}=1$.
Moreover,
$v = k(2m_\delta)+v''$ and $v''-m_\delta\notin S$, thus
$b_v = 2k+1 \equiv 1 \mod 2$.
\end{itemize}

\medskip

\noindent{\bf Case $\delta\neq\b{\delta}$.}
One can write
$$
P_\nu(t)\cdot \frac{1-t^{m_\rho}}{1+t^{m_\rho}} =
\frac{P_{\nu_C}(t)}{1-t^{M_\delta}}\cdot \frac{1-t^{m_\rho}}{1+t^{m_\rho}} =
\frac{P^S(t)\cdot (1+t^{m_\rho}) }{1-t^{M_\delta}}\cdot \frac{1-t^{m_\rho}}{1+t^{m_\rho}} =
Q(t)\cdot(1-t^{m_\rho}) \;,
$$
where $Q(t)=P^S(t)/(1-t^{M_\delta})=\sum q_vt^v$.
The coefficients $q_v$ can be computed as above, i.\,e.,
write $v= k M_\delta+v'$ such that $v'-M_\delta\notin S$, then
$q_v=k+1$.
Since $P_\nu(t)=\sum b_vt^v=Q(t)\cdot (1+t^{m_\rho})$, one has
$b_v=q_v+q_{v-m_\rho}$ for all $v\in \Z$.

Now, $v-m_\rho = k M_{\delta} + (v'-m_\rho)$ and therefore
\begin{itemize}
\item
If $v'-m_\rho\in S$, then $q_{v-m_\rho}=q_v=k+1$ and thus
$b_v=2 q_v=2(k+1)\equiv 0\mod 2$.
\item
If $v'-m_\rho\notin S$, then
$v-m_\rho = (k-1)M_\delta + (M_\delta -m_\rho +v')$.
We know that $M_\delta = m_\delta + \ell m_\rho$ with
$\ell\in \Z, \ell\ge 1$. Thus
$v''=M_\delta-m_\rho+v'\in S$ and $v''-M_{\delta}\notin S$.
This implies that
$q_{v-m_\rho}=k=q_v-1$ and therefore
$b_v=2 q_v-1=2k+1\equiv 1\mod 2$.
\end{itemize}

On the other hand, $Q(t)(1-t^{m\rho}) = \sum (q_v-q_{v-m_\rho})\cdot t^v$ and
the computations above show also that
the coefficients are
$q_v-q_{v-m_\rho}=0$ if $v'-m_\rho\in S$ and
$q_v-q_{v-m_\rho}=1$ if $v'-m_\rho\notin S$.
This proves the statement of the Theorem.
\end{proof}

\begin{remark}
 The equations from Theorems~\ref{theo:classic_divis} and~\ref{theo:real_divisorial}
 imply that each of the series $P_{\nu}(t)$ and $P_\nu^{\chi}(t)$ determines the minimal {\bf real} resolution of the
 divisorial valuation $\nu$.
\end{remark}

Addresses:

A. Campillo and F. Delgado:
IMUVA (Instituto de Investigaci\'on en
Matem\'aticas), Universidad de Valladolid,
Paseo de Bel\'en, 7, 47011 Valladolid, Spain.
\newline E-mail: campillo\symbol{'100}agt.uva.es, fdelgado\symbol{'100}uva.es

S.M. Gusein-Zade:
Moscow State University, Faculty of Mathematics and Mechanics, Moscow Center for
Fundamental and Applied Mathematics, Moscow, Leninskie Gory 1, GSP-1, 119991, Russia.\\
\& National Research University ``Higher School of Economics'',
Usacheva street 6, Moscow, 119048, Russia.
\newline E-mail: sabir\symbol{'100}mccme.ru

\end{document}